\documentclass[11pt,a4paper,oneside,reqno]{amsart}

% begin preamble

\usepackage{latexsym}
\usepackage{amsmath}
\usepackage{amsthm}
\usepackage{amssymb}
\usepackage{amsfonts}
\usepackage{fancyhdr}
\usepackage{comment}

\usepackage{mathrsfs}                           % For different calligraphic fonts

         % Formatting for the biconditional

                % Formatting for ker(A)
                  % Formatting for im(A)
                % Formatting for det(A)

                    % Formatting for R
                    % Formatting for C
\newcommand{\R}{\mathbb{R}}                    % Formatting for R
                    % Formatting for C
                    % Formatting for K
                    % Formatting for Z
                    % Formatting for N
\newcommand{\abs}[1]{\lvert #1 \rvert}          % Formatting for the absolute value
               % Formatting for the absolute value
\newcommand{\norm}[1]{\lVert #1 \rVert}         % Formatting for the norm
     % Formatting for the inner product
         % Formatting for the inner product
                        % Formatting for the row space
                        % Formatting for the column space

%\newcommand{\w}[1]{\widehat{#1}}

             % Formatting for the norm

                  % Formatting for the norm
          % Formatting for the norm
       % Formatting for the norm

\newcommand{\id}{\mathrm{Id}}

%\DeclareMathSymbol{\N}{\mathbin}{AMSb}{"4E}\DeclareMathSymbol{\Z}{\mathbin}{AMSb}{"5A}\DeclareMathSymbol{\R}{\mathbin}{AMSb}{"52}\DeclareMathSymbol{\Q}{\mathbin}{AMSb}{"51}\DeclareMathSymbol{\I}{\mathbin}{AMSb}{"49}\DeclareMathSymbol{\C}{\mathbin}{AMSb}{"43}

%tonyn omat
\newcommand{\tS}{\mathcal{S}}
\newcommand{\ol}{\overline}
\newcommand{\tG}{\mathcal{G}}
\newcommand{\tC}{\mathcal{C}}

\linespread{1.02}

\theoremstyle{definition}
\newtheorem{thm}{Theorem}[section]
\newtheorem*{thmnonum}{Theorem}

\newtheorem{cor}[thm]{Corollary}

\newtheorem*{conjecture}{Conjecture}

\numberwithin{equation}{section}

\title[Nowhere conformally homogeneous manifolds]{Nowhere conformally homogeneous manifolds and limiting Carleman weights}

\author{Tony Liimatainen}
\address{Department of Mathematics and Systems Analysis, Aalto University}
\email{tony.liimatainen@tkk.fi}
\author{Mikko Salo}
\address{Department of Mathematics and Statistics, University of Helsinki}
\email{mikko.salo@helsinki.fi}

\date{November 8, 2010}

% end preamble

\begin{document}

\begin{abstract}
In this note we prove that a generic Riemannian manifold of dimension $\geq 3$ does not admit any nontrivial local conformal diffeomorphisms. This is a conformal analog of a result of Sunada concerning local isometries, and makes precise the principle that generic manifolds in high dimensions do not have conformal symmetries. Consequently, generic manifolds of dimension $\geq 3$ do not admit nontrivial conformal Killing vector fields near any point. As an application to the inverse problem of Calder\'on on manifolds, this implies that generic manifolds of dimension $\geq 3$ do not admit limiting Carleman weights near any point.
\end{abstract}

\maketitle

%\tableofcontents

\section{Introduction} \label{section_introduction}

It is a well-known fact in geometry that generic Riemannian manifolds should not admit symmetries or other special structure. Various precise statements which formalize this principle may be found in the literature \cite{Ab, An, BaUr, Ber, KT, Uhl}. We mention here the result concerning nonexistence of local isometries proved by Sunada \cite{Su} in his work on isospectral manifolds.

\begin{thmnonum}
Let $M$ be a compact $C^{\infty}$ manifold without boundary. There is a residual set of Riemannian metrics on $M$ for which there are no isometries between any distinct open subsets of the manifold.
\end{thmnonum}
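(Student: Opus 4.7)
The plan is a Baire category argument in the Fréchet space $\mathscr{G}$ of $C^{\infty}$ Riemannian metrics on the compact manifold $M$. I will exhibit the set of metrics admitting a nontrivial local isometry as a countable union of closed, nowhere-dense subsets of $\mathscr{G}$, so that the complement is residual and provides the theorem.

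The structural input is the rigidity of Riemannian isometries: any local isometry $\phi\colon U\to V$ is uniquely determined by its $1$-jet $(p,\phi(p),d\phi_p)$ at a single point $p\in U$. Admissible $1$-jets form a compact finite-dimensional space $P$ (of dimension $2n + n(n-1)/2$), consisting of pairs of points $p,q\in M$ together with a linear isometry $A\colon T_pM\to T_qM$. Moreover, if $(p,q,A)\in P$ is the $1$-jet of an isometry, then $A$ must intertwine every covariant derivative of the Riemann tensor:
\begin{equation*}
A^{\ast}(\nabla^k R_g)_q = (\nabla^k R_g)_p, \qquad k = 0, 1, 2, \ldots.
\end{equation*}
For each $N\in\mN$, let $\mathscr{G}_N^{\mathrm{bad}}$ denote the set of metrics $g$ for which such a triple exists with $d_g(p,q)\geq 1/N$ and the equations above holding for all $k\leq N$. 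Compactness of $P$ together with the continuous dependence of $\nabla^k R_g$ and $d_g$ on $g$ in the $C^{\infty}$ topology imply that $\mathscr{G}_N^{\mathrm{bad}}$ is closed.

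The core step is to show that $\mathscr{G}_N^{\mathrm{bad}}$ has empty interior. Given $g_0 \in \mathscr{G}_N^{\mathrm{bad}}$ and a compatible triple $(p_0,q_0,A_0)$, I would consider perturbations $g_t = g_0+th$ with $h$ a smooth symmetric $2$-tensor compactly supported in a small neighborhood of $p_0$ disjoint from $q_0$. A normal-coordinate linearization shows that the map sending the Taylor coefficients of $h$ at $p_0$ (up to order $N+2$) to the variation $\partial_t|_{t=0}(\nabla^k R_{g_t})_{p_0}$, $k\leq N$, is surjective onto the corresponding jet target modulo the diffeomorphism gauge, while $(\nabla^k R_{g_t})_{q_0}$ is untouched. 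Hence a generic such $h$ breaks the matching conditions at $(p_0,q_0,A_0)$; by local continuity and compactness of the set of candidate triples, finitely many successive localized perturbations suffice to break the conditions at every triple simultaneously. A transversality argument makes this rigorous: once $N$ is large enough that the dimension of the $N$-jet target exceeds $\dim P$, the matching locus in $\mathscr{G}\times P$ has positive codimension over $\mathscr{G}$, so its projection is nowhere dense.

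Intersecting over $N\in\mN$ yields a residual set of metrics admitting no local isometry with $\phi(p) \neq p$; an entirely parallel argument with $p=q$ but $A\neq\id$ rules out nontrivial self-isometries through a point, completing the proof. The principal obstacle is the perturbation surjectivity step: one must carry out the explicit linearization of the Riemann tensor and its covariant derivatives under metric variation and verify that, after quotienting by diffeomorphism invariance and the natural tensor symmetries of $(R,\nabla R,\ldots,\nabla^N R)$, the induced map on curvature jets at $p_0$ is surjective. This is the technical heart of the argument and forces working with arbitrarily high-order jets to make the dimension count favorable.
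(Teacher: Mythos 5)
The theorem you are addressing is the one the paper quotes from Sunada and does not reprove; its own Theorem 1.1 is proved as an explicit ``conformal analogue of the proof of Sunada's result,'' so that is the template to compare against. Your global architecture --- express the metrics admitting a local isometry as a countable union of closed sets and kill each by a jet-dimension count --- is the same, but you replace both key devices. For closedness the paper localizes to a countable basis of balls and extracts a limiting map via Arzel\`a--Ascoli together with the regularity theorems of Taylor and Lelong-Ferrand; you instead work on the compact space $P$ of candidate $1$-jets with the matching equations $A^*(\nabla^k R_g)_q=(\nabla^k R_g)_p$, which makes closedness essentially free and is a genuine simplification available in the isometric (curvature-invariant) setting. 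For density the paper applies Sard's theorem to the map sending (jet of diffeomorphism, jet of conformal factor, target point) to the jet of the pulled-back metric: the only input is an \emph{upper} bound on the rank, namely the dimension of the domain. Your route instead requires a \emph{lower} bound --- surjectivity of the linearized map from metric perturbations at $p_0$ onto the space of curvature jets $(R,\nabla R,\dots,\nabla^N R)_{p_0}$ --- and that is where the proposal stops being a proof.

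Concretely, there are three gaps. (i) Indexing: $\mathscr{G}_N^{\mathrm{bad}}$ imposes only the first $N$ matching equations, and for small $N$ these are fewer than $\dim P$ conditions, so $\mathscr{G}_N^{\mathrm{bad}}$ can have nonempty interior (for $n=2$, $N=1$ one expects it to be all of $\mathscr{G}$); then $\tC\mathscr{G}_N^{\mathrm{bad}}$ is not dense and intersecting over $N$ does not yield a residual set. The fix is to fix one jet order $N_0$ large enough for the dimension count and let only the separation parameter $1/m$ vary. (ii) The surjectivity of the linearized curvature-jet map is asserted, not proved, and you yourself flag it as the technical heart; it also requires identifying the correct target, since $(R,\nabla R,\dots,\nabla^N R)$ satisfies Bianchi-type identities, so the naive jet space is wrong and the dimension count must be redone on the constrained space. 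This is precisely the work the paper's Sard argument is designed to avoid. (iii) ``Finitely many successive localized perturbations break the conditions at every triple'' is not a valid argument: the candidate triples form a continuum and a later perturbation can restore a matching destroyed by an earlier one. The parametric transversality theorem is the right tool, but it must be run over a Banach space of $C^r$ metrics (or over finite-dimensional families of perturbations) and then intersected over $r$, none of which is set up. Until (ii) and (iii) are carried out, the proposal is a plausible alternative strategy rather than a proof; the paper's rank-bound-plus-Sard argument closes precisely because it never needs any surjectivity statement.
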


Recall that a subset of a topological space is called \emph{residual} if it contains a countable intersection of open dense sets. A property is called \emph{generic} if it holds for all elements in some residual set. We equip the set of all Riemannian metrics on $M$ with the $C^{\infty}$ topology as described in \cite{BaUr}, and this space then becomes a complete metric space. By Baire's theorem, any residual set in a complete metric space is dense. Thus, in particular, the set of metrics admitting no local isometries is dense in the set of all Riemannian metrics on $M$.

We are interested in a conformal version of Sunada's result. Here the two-dimensional case is special, since any Riemann surface is locally conformal to a subset of Euclidean space by the existence of isothermal coordinates and thus admits many nontrivial local conformal diffeomorphisms. It is an accepted fact in conformal geometry that generic Riemannian manifolds of dimension $n \geq 3$ should not admit conformal symmetries. However, we were not able to find a precise proof for such a statement in the literature. In this note we prove the following theorem which makes this statement rigorous.

\begin{thm} \label{thm_main}
Let $M$ be a compact boundaryless $C^{\infty}$ manifold having dimension $n \geq 3$. There is a residual set of Riemannian metrics on $M$ for which there are no conformal diffeomorphisms between any distinct open subsets of the manifold.
\end{thm}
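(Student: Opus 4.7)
\smallskip

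\noindent\textbf{Proof plan.} The approach is a Baire category argument modeled on Sunada's proof for isometries, with conformal curvature invariants playing the role that Riemann curvature plays for isometries. Fix a countable basis $\{U_k\}_{k \in \mN}$ of precompact coordinate balls of $M$, and for pairs $(j,k)$ with $\ol{U_j} \cap \ol{U_k} = \emptyset$ let $\mathcal{B}_{j,k}$ be the set of $C^{\infty}$ metrics admitting a conformal embedding $\varphi : U_j \hookrightarrow M$ with $\varphi(U_j) \subset U_k$. Given any conformal diffeomorphism $\varphi : U \to V$ between distinct open subsets, pick $p \in U \setminus V$ (or its analog via $\varphi^{-1}$) and basis sets $U_j \ni p$, $U_k \ni \varphi(p)$ with disjoint closures, $U_j$ small enough that $\varphi(U_j) \subset U_k$; this shows the ``bad'' set of metrics is contained in $\bigcup_{(j,k)} \mathcal{B}_{j,k}$, so it suffices to prove each $\mathcal{B}_{j,k}$ is nowhere dense.

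For closedness of $\mathcal{B}_{j,k}$: suppose $g_n \to g$ in $C^{\infty}$ and $\varphi_n : \ol{U_j} \to \ol{U_k}$ satisfies $\varphi_n^* g_n = e^{2\omega_n} g_n$. In local charts the conformal equations form a nondegenerate nonlinear elliptic system once the conformal factor $\omega_n$ is pinned down by the transformation law for a suitable scalar curvature invariant of $g_n$. Elliptic regularity then gives uniform $C^{\infty}$ bounds on $\varphi_n$ and $\omega_n$, and Arzel\`a--Ascoli produces a subsequential limit which is a conformal embedding for $g$, so $g \in \mathcal{B}_{j,k}$.

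The main effort is density of the complement. Given $g$ and $\eps > 0$, I would construct a $C^{\infty}$ perturbation $g' = g + h$ with $\norm{h}_{C^{\infty}} < \eps$ and supported in a neighborhood of $\ol{U_j} \cup \ol{U_k}$ that admits no conformal embedding $U_j \to U_k$. The key fact is that any such $\varphi$ preserves the Weyl tensor $W_{g'}$ as a $(1,3)$ tensor when $n \geq 4$, hence preserves every pointwise scalar invariant built from $W_{g'}$ and its covariant derivatives that is homogeneous of weight zero in the metric (e.g.\ ratios $|\nabla^a W|_{g'}^{\,p} / |\nabla^b W|_{g'}^{\,q}$ of contractions whose conformal weights match). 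By a Whitney jet-transversality argument applied to the finite-dimensional space of jets of such invariants, one chooses $h$ so that finitely many of them jointly separate every point of $\ol{U_j}$ from every point of $\ol{U_k}$, ruling out $\varphi$. In dimension $n = 3$ the Weyl tensor vanishes identically, and the same argument is carried out using the Cotton tensor, which is the appropriate conformal obstruction tensor in that dimension, and its covariant derivatives.

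The main obstacle I expect is this density step, for two reasons. First, the conformal factor $e^{2\omega}$ adds an infinite-dimensional family of degrees of freedom absent in Sunada's isometry setting, so one cannot perturb ``raw'' curvature tensors but must work instead with conformally weighted combinations, which narrows the pool of usable invariants. Second, in dimension $3$ the vanishing of $W$ forces the transversality argument to be carried out at the level of jets of the Cotton tensor rather than at the level of an algebraic invariant of the metric, requiring a careful dimension count to ensure that the chosen $h$ simultaneously breaks every conformal embedding $U_j \to U_k$ rather than just one of them.
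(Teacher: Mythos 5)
Your outline (localize to a countable basis, show each bad set is closed or a countable union of closed sets, then show density of the complement) matches the paper's, and your localization step is essentially identical to the paper's Step I. But both of your substantive steps have genuine gaps. For closedness, you assert that the conformal factor $\omega_n$ can be ``pinned down by the transformation law for a suitable scalar curvature invariant,'' after which elliptic regularity gives uniform bounds. This fails: in dimension $3$ the Weyl tensor vanishes identically, and for conformally flat metrics in any dimension there is no pointwise scalar invariant that determines the factor (the scalar curvature transforms through a Yamabe-type equation involving derivatives of $\omega$, not pointwise). Without pinning down $\omega_n$ there is no a priori bound to start any bootstrap, and the maps $\varphi_n$ can degenerate, so closedness of the full set $\mathcal{B}_{j,k}$ is not established. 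The paper avoids this entirely by stratifying $\mathcal{S}_{ik}=\bigcup_N \mathcal{S}_{ik}^N$ according to a bound $\norm{c}_{W^{s,2}}+\norm{1/c}_{W^{s,2}}\leq N$ on the conformal factor; it then gets compactness of the $\varphi_n$ from equicontinuity of distance functions and Arzel\`a--Ascoli, and regularity of the limit from Taylor's theorem on isometries and Ferrand's theorem on conformal homeomorphisms, with no elliptic system in sight. A countable union of closed sets with open dense complements still yields a residual complement, so the stratification costs nothing.

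The density step is where your route diverges most and where the larger gap lies. Covariant derivatives $\nabla^a W$ are \emph{not} conformally invariant (the Levi-Civita connection changes under rescaling), so the ``ratios of contractions whose conformal weights match'' that you propose are not actually preserved by conformal maps; making this work requires conformally covariant derivative operators (tractor calculus), and even then it is not clear that enough weight-zero scalar invariants exist to separate points after a small perturbation --- near metrics where $W$ (or the Cotton tensor) vanishes to high order all such invariants degenerate simultaneously, and your transversality claim is unsupported. The paper replaces all of this invariant theory with a direct dimension count: any metric admitting a conformal map $\phi:\ol{U}\to\ol{V}$ has, at a point of $U$, a $k$-jet lying in the image of a smooth map $G:\mathcal{C}^k\times\mathcal{D}^{k+1}\times V\to\mathcal{M}^k$ built from the jet of the conformal factor, the jet of the diffeomorphism, and the base point $\phi(0)$. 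The domain has dimension $\binom{n+k}{k}+n\binom{n+k+1}{k+1}$, which for $k\geq 4$ and $n\geq 3$ is strictly less than $\dim\mathcal{M}^k=\frac{n(n+1)}{2}\binom{n+k}{k}$, so by Sard's theorem the image has measure zero and one can perturb the metric near a single point to escape it. This argument treats $n=3$ and $n\geq 4$ uniformly and requires no knowledge of which conformal invariants exist. I would recommend abandoning the invariant-theoretic density argument in favor of this counting argument, and adopting the $\mathcal{S}^N$ stratification for the closedness step.
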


Recall that a diffeomorphism $\phi$ from $(U,g)$ onto $(V,h)$ is called conformal if $\phi^* h = c g$ in $U$ for some smooth positive function $c$ on $U$. The function $c$ is called the conformal factor, and two manifolds are called conformal if there is a conformal diffeomorphism between them. The result states that for a generic Riemannian metric $g$ on $M$, there are no distinct open subsets $U$ and $V$ such that $(U,g)$ and $(V,g)$ would be conformal.

We now state some corollaries of the main theorem. A smooth vector field $X$ on a Riemannian manifold $(U,g)$ is called \emph{Killing} if the local flows generated by $X$ are isometries, and \emph{conformal Killing} if $X$ is Killing in $(U,cg)$ for some positive function $c$. Equivalently, $X$ is Killing (resp.~conformal Killing) if and only if $\mathcal{L}_X g = 0$ (resp.~$\mathcal{L}_X g = \lambda g$ for some function $\lambda$) where $\mathcal{L}_X g$ is the Lie derivative of $g$. If $g$ is given in local coordinates by the matrix $(g_{jk})$, then $\mathcal{L}_X g$ is the symmetric $2$-tensor with local coordinate expression  
$$
\mathcal{L}_X g = \sum_{i,j,k=1}^n (g_{jk} X^k_{;i} + g_{ik} X^k_{;j}) \,dx^i \otimes \,dx^j.
$$
Here $X^k_{;i} = \partial_{x_i} X^k + \Gamma_{il}^k X^l$ is the covariant derivative, where $\Gamma_{ab}^c$ are the Christoffel symbols for $g$.

It follows that local flows generated by conformal Killing vector fields are conformal diffeomorphisms, and the existence of a nontrivial conformal Killing field near a point implies the existence of a conformal diffeomorphism from some open set $(U,g)$ onto a disjoint open set $(V,g)$. Thus we have the following consequence of Theorem \ref{thm_main}.

\begin{cor} \label{cor_main2}
Let $M$ be a compact boundaryless $C^{\infty}$ manifold having dimension $n \geq 3$. There is a residual set of Riemannian metrics on $M$ which do not admit a nontrivial conformal Killing vector field near any point.
\end{cor}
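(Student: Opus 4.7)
The plan is to derive Corollary~\ref{cor_main2} directly from Theorem~\ref{thm_main}, by showing that the existence of a nontrivial conformal Killing vector field in any neighborhood already produces a conformal diffeomorphism between two disjoint open subsets. The same residual set supplied by Theorem~\ref{thm_main} will then work for the corollary.

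First, I will assume for contradiction that $(M,g)$ admits a nontrivial conformal Killing field $X$ on some neighborhood $W$ of a point, so $\mathcal{L}_X g = \lambda g$ on $W$ for some smooth $\lambda$. Since $X$ is not identically zero on $W$, I can choose $q \in W$ with $X(q) \neq 0$. The flow-box theorem then supplies a small open neighborhood $U \ni q$ and an $\eps > 0$ such that the local flow $\phi_t$ of $X$ is a diffeomorphism from $U$ onto $\phi_t(U) \subset W$ for every $|t| < \eps$.

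Next, I will verify that each $\phi_t$ is conformal. The standard identity $\frac{d}{dt}(\phi_t^* g) = \phi_t^*(\mathcal{L}_X g) = (\lambda \circ \phi_t)\, \phi_t^* g$ is a pointwise linear ODE in $\phi_t^* g$ whose solution is $\phi_t^* g = c_t g$ for the smooth positive function $c_t = \exp\!\bigl(\int_0^t \lambda \circ \phi_s\, ds\bigr)$ on $U$. Because $X(q) \neq 0$, continuity of the flow lets me shrink $U$ about $q$ and fix some $t_0 \in (0,\eps)$ so that $V := \phi_{t_0}(U)$ is disjoint from $U$. Then $\phi_{t_0} \colon (U,g) \to (V,g)$ is a conformal diffeomorphism between two distinct open subsets of $M$. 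If $g$ lies in the residual set produced by Theorem~\ref{thm_main}, this is a contradiction.

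I do not expect any real obstacle here: the content is already carried by Theorem~\ref{thm_main}, and the only additional ingredients are the routine flow identity $\frac{d}{dt}\phi_t^* g = \phi_t^* \mathcal{L}_X g$ and the elementary observation that a nonvanishing vector field displaces sufficiently small neighborhoods. The only minor point worth stating explicitly is the interpretation of ``nontrivial conformal Killing vector field near a point'' as a conformal Killing field on some neighborhood which does not vanish identically there; under this reading the reduction to Theorem~\ref{thm_main} is immediate.
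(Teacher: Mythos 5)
Your argument is correct and is essentially the paper's own: the paper derives Corollary~\ref{cor_main2} from Theorem~\ref{thm_main} by exactly this observation that local flows of conformal Killing fields are conformal diffeomorphisms, so a nontrivial such field near a point yields a conformal diffeomorphism between disjoint open sets. You have merely written out the details (the ODE $\frac{d}{dt}\phi_t^* g = (\lambda\circ\phi_t)\,\phi_t^* g$ and the displacement of a small neighborhood) that the paper leaves implicit.
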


Following \cite{NRS}, we say that a Riemannian manifold is \emph{locally conformally homogeneous} if for any point $p$ and for any tangent vector $v$ at $p$, there is a conformal Killing vector field $X$ in some neighborhood of $p$ such that $X(p) = v$. The last result implies that generic metrics in dimensions $n \geq 3$ are nowhere conformally homogeneous.

It is easy to convince oneself why Corollary \ref{cor_main2} could be true. If $\mathcal{L}_X g = \lambda g$, it follows by taking traces that $\lambda = \frac{2}{n} \text{div}(X)$ where $\text{div}(X) = \sum_{k=1}^n X^k_{;k}$ is the metric divergence. Thus conformal Killing vector fields are solutions of the equation $T(X) = 0$, where $T$ is the trace-free deformation tensor %\cite{taylor_toolspde}
$$
T(X) = \mathcal{L}_X g - \frac{2}{n} \text{div}(X) g.
$$
Since $T(X)$ is trace-free and symmetric, in local coordinates $T(X) = 0$ is a system of $\frac{n(n+1)}{2}-1$ equations in $n$ unknowns. Therefore, if $n \geq 3$, it is not surprising that for generic metrics the only solution is $X = 0$.

A more sophisticated approach, described in \cite{BCEG} in a general setup, is to observe that the overdetermined system $T(X) = 0$ is of finite type. By the method of prolongation, one can construct a vector bundle $V$ together with a connection $\tilde{\nabla}$ such that solutions of $T(X) = 0$ correspond to parallel sections of $V$. However, the curvature of $\tilde{\nabla}$ imposes restrictions on the existence of parallel sections, and again for generic $g$ one expects to have no nontrivial solutions. Related results are given in \cite{Gov, GovS, Sem}.

The final corollary concerns recent results on Calder\'on's inverse problem on manifolds. This was the original motivation for the present study, and we explain the setup in some detail. Calder\'on's inverse problem was introduced in the Euclidean case by Calder\'on \cite{C} as the question of determining the conductivity function of a medium from voltage and current measurements made on its boundary, and it has been intensively studied (see the recent survey \cite{U_IP}). The anisotropic case corresponds to determining a conductivity matrix up to change of coordinates instead of a scalar function, and it was observed in \cite{LeU} that this is very closely related to a geometric problem which we proceed to describe.

Let $(M,g)$ be a compact oriented Riemannian manifold with smooth boundary and let $\Delta_g$ be the Laplace-Beltrami operator on $M$. %In local coordinates, 
%$$
%\Delta_g u= \frac{1}{\sqrt{\det(g)}} \sum_{j,k=1}^n \frac{\partial}{\partial x_j} \left( \sqrt{\det(g)} g^{jk} \frac{\partial u}{\partial x_k} \right).
%$$
%where $g = (g_{jk})$ is the local coordinate representation of the metric, and $(g^{jk})$ is the inverse matrix of $(g_{jk})$.
The boundary data of harmonic functions on $(M,g)$ is given by the Cauchy data set 
$$
C_g = \{ (u|_{\partial M}, \partial_{\nu} u|_{\partial M}) \,;\, \Delta_g u = 0 \text{ in } M, u \in C^{\infty}(M) \}.
$$
Equivalently, the Cauchy data set is the graph of the Dirichlet-to-Neumann map 
$$
\Lambda_g: C^{\infty}(\partial M) \to C^{\infty}(\partial M), \ f \mapsto \partial_{\nu} u|_{\partial M} \text{ where } \Delta_g u = 0 \text{ in $M$, $u|_{\partial M} = f$}.
$$
Here $\partial_{\nu} u|_{\partial M} = g(du,\nu)|_{\partial M}$ where $\nu$ is the $1$-form corresponding to the outer unit normal vector on $\partial M$.

It follows immediately that $C_{\psi^* g} = C_g$ whenever $\psi: M \to M$ is a diffeomorphism satisfying $\psi|_{\partial M} = \id$. The Calder\'on problem on manifolds asks to prove that the Riemannian metric on a manifold is determined up to diffeomorphism by the Cauchy data of harmonic functions (or, equivalently, by the Dirichlet-to-Neumann map).

\begin{conjecture}
Let $(M,g_1)$ and $(M,g_2)$ be two compact oriented Riemannian manifolds with smooth boundary having dimension $n \geq 3$. If $C_{g_1} = C_{g_2}$, then $g_2 = \psi^* g_1$ for some diffeomorphism $\psi: M \to M$ satisfying $\psi|_{\partial M} = \id$.
\end{conjecture}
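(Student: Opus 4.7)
The plan is to attack the conjecture along the now-standard Carleman/complex geometric optics (CGO) route developed for the anisotropic Calder\'on problem, while being upfront that this paper's main theorem already exposes the central obstruction. First I would reduce to the interior by recovering the full Taylor expansion of $g_1 - g_2$ at $\partial M$ in boundary normal coordinates from $\Lambda_{g_1} = \Lambda_{g_2}$, using the pseudodifferential calculus of the Dirichlet-to-Neumann map as in Lee--Uhlmann. In the real-analytic category this boundary jet already determines $g$ globally by unique continuation, so the genuinely open case is $C^\infty$.

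In the interior, the strategy is to construct two families of harmonic functions $u_j$ on $(M,g_j)$, plug them into an Alessandrini-type identity derived from $C_{g_1} = C_{g_2}$, and let a large parameter tend to infinity to extract pointwise information about $g_1 - g_2$. Concretely, if $(M,g)$ admits a \emph{limiting Carleman weight} $\varphi$, one looks for CGO-type solutions of the form $u = e^{-\tau(\varphi + i\psi)}(a + r)$ with $\tau \to \infty$, where $a$ is transported along the level sets of $\varphi$ and the remainder $r$ is controlled by a Carleman estimate. Pairing such solutions for $g_1$ and $g_2$ would reduce the determination of $g$ to the injectivity of a geodesic ray transform on a family of two-dimensional transversal leaves foliating $M$, which is tractable when these leaves are simple surfaces.

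The main obstacle is precisely what the present paper makes rigorous: the existence of a limiting Carleman weight is a rigid conformal-geometric condition, equivalent to the existence of a parallel unit vector field for some conformally related metric, and hence tied to the conformal symmetries ruled out near any point for generic $g$ by Theorem~\ref{thm_main} and Corollary~\ref{cor_main2}. Thus on a generic manifold the CGO construction cannot even be initiated, and the conjecture demands a genuinely new input. Two natural directions are to allow phase functions that fail the LCW eikonal equation but only up to controlled errors absorbed by sharper Carleman bounds, or to replace CGO by Gaussian beam quasimodes concentrating along individual geodesics and to reduce matters to the ordinary geodesic ray transform. In either scenario one then glues the interior reconstruction to the boundary jet from step one, and in the $C^\infty$ case controls the discrepancy with a dense set of real-analytic approximations. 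The construction of usable large-parameter solutions in the absence of any Carleman weight is, in my view, the real conceptual hard part; everything else reduces to established pieces of the program.
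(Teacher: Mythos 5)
The statement you are trying to prove is labelled a \emph{Conjecture} in the paper, and the paper offers no proof of it: the authors explicitly record that it is known only for real-analytic manifolds, Einstein manifolds, and Riemann surfaces, and that ``the general case in dimensions $n \geq 3$ is open.'' So there is no argument of the paper to compare yours against, and your submission should be judged purely on whether it closes the problem. It does not. What you have written is a survey of the standard attack, and you yourself concede the decisive point at the end: on a generic manifold (by Theorem~\ref{thm_main} and Corollary~\ref{cor_main2} of this very paper) there is no limiting Carleman weight near any point, so the CGO construction you describe cannot be started, and your two proposed substitutes (approximate eikonal phases with ``controlled errors,'' Gaussian beam quasimodes) are named but not constructed, let alone shown to yield an Alessandrini identity whose limit determines $g_1 - g_2$. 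A proof cannot delegate its central step to ``a genuinely new input.''

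There are also concrete overstatements in the parts you treat as routine. First, even when a limiting Carleman weight exists, the argument of \cite{DKSaU} requires the manifold to be \emph{admissible} (conformally embedded in a product with a simple transversal factor) and, crucially, it determines only the conformal factor within a known conformal class; it does not recover the full metric up to diffeomorphism, which is what the conjecture asks. The injectivity of the attenuated geodesic ray transform on the transversal slices, which you invoke as the endpoint of the reduction, is itself not known in the generality you would need. Second, the closing suggestion to ``control the discrepancy with a dense set of real-analytic approximations'' has no mechanism behind it: passing from the real-analytic case to the smooth case by density would require a quantitative stability estimate for the map $g \mapsto C_g$ modulo diffeomorphism, and no such estimate is known (this is precisely why the smooth case has resisted solution). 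The boundary-jet recovery via the Lee--Uhlmann symbol calculus is the only step of your outline that is actually established, and it is already part of the known partial results. In short: the gap is not a technicality but the entire interior determination, and the conjecture remains open.
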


This has been proved for real-analytic manifolds \cite{LTU, LaU, LeU} and also for Einstein manifolds \cite{GS}, and the corresponding two-dimensional result is known for any smooth Riemann surface \cite{LaU}. The general case in dimensions $n \geq 3$ is open. Recently in \cite{DKSaU, KSaU} progress was made on this problem for a certain class of conformal smooth manifolds. The results were based on a construction of special harmonic functions involving \emph{limiting Carleman weights}, introduced in \cite{KSU} in the Euclidean case. We refer to \cite{DKSaU} for the precise definition. For the purposes of this paper, we only mention that a limiting Carleman weight is essentially a smooth real-valued function $\varphi$ with nonvanishing gradient for which one expects to have weighted estimates 
$$
\norm{u}_{L^2} \leq \frac{C}{\abs{\tau}} \norm{e^{\tau \varphi} \Delta_g e^{-\tau \varphi} u}_{L^2}, \quad u \in C^{\infty}_c,
$$
which hold for all real $\tau$ with $\abs{\tau}$ sufficiently large and where $C$ is independent of $\tau$ and $u$.

Let $(U,g)$ be an open Riemannian manifold (that is, $U$ has no boundary and no component is compact). It was proved in \cite[Theorem 1.2]{DKSaU} that a simply connected open manifold $(U,g)$ admits a limiting Carleman weight if and only if $(U,cg)$ admits a parallel unit vector field for some positive function $c$. Since a parallel vector field is Killing, the existence of a limiting Carleman weight implies the existence of a nontrivial conformal Killing vector field. Corollary \ref{cor_main2} then implies the following result, which was stated informally in \cite{DKSaU} without proof.

\begin{cor}
Let $(U,g)$ be an open submanifold of some compact manifold $(M,g)$ without boundary, having dimension $n \geq 3$. There is a residual set of Riemannian metrics on $M$ which do not admit limiting Carleman weights near any point of $U$.
\end{cor}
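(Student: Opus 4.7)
The plan is to reduce the statement immediately to Corollary \ref{cor_main2} via the characterization of limiting Carleman weights recalled from \cite{DKSaU}. Let $\mathcal{R}$ denote the residual set of Riemannian metrics on $M$ provided by Corollary \ref{cor_main2}, that is, metrics $g$ which admit no nontrivial conformal Killing vector field in any neighborhood of any point of $M$. I claim that the same set $\mathcal{R}$ witnesses the present corollary; since $U \subset M$, this already gives a residual set in the space of metrics on $M$.

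Suppose, aiming for contradiction, that some $g \in \mathcal{R}$ admits a limiting Carleman weight $\varphi$ in some open neighborhood $W \subset U$ of a point $p \in U$. Shrinking $W$ if necessary, we may assume $W$ is an open ball in some coordinate chart, hence simply connected, and that $W$ has no compact component (so $(W,g)$ is an open Riemannian manifold in the sense used in \cite{DKSaU}). By \cite[Theorem 1.2]{DKSaU}, the existence of $\varphi$ on $W$ is equivalent to the existence of a smooth positive function $c$ on $W$ and a vector field $Z$ on $W$ which is parallel and of unit length with respect to the conformally rescaled metric $cg$.

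A parallel vector field is Killing; in particular $\mathcal{L}_Z(cg) = 0$ on $W$. Expanding the Lie derivative gives
$$
0 = \mathcal{L}_Z(cg) = Z(c)\, g + c\, \mathcal{L}_Z g,
$$
so that $\mathcal{L}_Z g = -\frac{Z(c)}{c}\, g$, exhibiting $Z$ as a conformal Killing vector field for $g$ on $W$. Since $Z$ has unit length with respect to $cg$, it is nowhere vanishing, hence nontrivial. But then $g$ admits a nontrivial conformal Killing field near the point $p$, contradicting $g \in \mathcal{R}$.

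No serious obstacle arises: the only point requiring a small amount of care is ensuring that the hypotheses of the cited equivalence from \cite{DKSaU} are met on the shrunken open set, which is immediate by restricting to a simply connected coordinate ball around $p$. The rest is the observation, already noted in the text preceding the corollary, that parallel implies Killing, which in turn implies conformal Killing for any conformally related metric.
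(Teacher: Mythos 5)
Your proposal is correct and follows exactly the paper's own route: the paper likewise invokes \cite[Theorem 1.2]{DKSaU} to convert a limiting Carleman weight on a simply connected neighborhood into a parallel unit vector field for $cg$, notes that parallel implies Killing (hence, by the paper's definition, conformal Killing for $g$), and concludes from Corollary \ref{cor_main2}. Your explicit Lie-derivative computation and the remark that the unit-length field is nonvanishing are harmless elaborations of the same argument.
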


\section{Proof of Theorem~\ref{thm_main}}
We now prove Theorem~\ref{thm_main}. Denote by $\tG$ the set of all Riemannian metrics on $M$ for which there are no conformal diffeomorphisms between any distinct open subsets of $M$. The statement of Theorem~\ref{thm_main} is that $\tG$ is a countable intersection of sets which are open and dense in the $C^\infty$ topology of Riemannian metrics on $M$.

We divide the proof in parts. First we show that, by choosing a suitable basis of open sets in $M$, the problem can be localized and reduces to finding suitable metrics for open balls in $\R^n$. Subsequently, we argue that the open balls admit a residual set of metrics for which there are no conformal diffeomorphisms between the balls. This involves using compactness and regularity results for conformal diffeomorphisms, and a dimension count argument based on jet spaces. The proof is a conformal analogue of the proof of Sunada's result \cite[Proposition 1]{Su} mentioned in the introduction.

\begin{proof}[Proof of Theorem~\ref{thm_main}]

I. \emph{Basis}: We choose a basis $\{U_i\}_{i=1}^{\infty}$ for the topology of $M$ such that the closure $\overline{U}_i$ of each $U_i$ is diffeomorphic to a closed ball in $\R^n$. Let us denote by $\tS_{ik}$ the set of metrics $g$ on $M$ for which there exists a conformal diffeomorphism $\phi:(\ol{U}_i,g)\rightarrow (\ol{U}_k,g)$. We will show that 
\begin{equation*}
 \tG = \bigcap_{i,k \,;\, \ol{U}_i \cap \ol{U}_k = \emptyset}\tC\mathcal{S}_{ik}.
\end{equation*}

It is clear that the set $\tG$ is contained in the complement of each $\tS_{ik}$, $i\neq k$. For the other direction, let a metric $g$ belong to the complement of $\tG$. Then there exists a conformal diffeomorphism $\phi\neq \mbox{Id}$ between some distinct open sets $U$ and $V$ in $M$. By changing $\phi$ to its (conformal) inverse if necessary, we can find a point $x\in U \setminus V$ with $\phi(x)\neq x$. Restricting $\phi$ to sufficiently small neighbourhoods of $x$ and $\phi(x)$ we have $g\in \tS_{ik}$ where $\ol{U}_i \cap \ol{U}_k = \emptyset$. Thus $\tC\mathcal{G}\subset \bigcup_{i,k \,;\, \ol{U}_i \cap \ol{U}_k = \emptyset}\mathcal{S}_{ik}$.

%To complete the proof of the theorem it is sufficient to prove that each $\tC\tS_{ik}$ is residual in the $C^\infty$ topology of Riemannian metrics on $M$.

II. \emph{$\tS_{ik}$ is a countable union of closed sets}:
We fix open sets $U_i, U_k$ with $\ol{U}_i \cap \ol{U}_k = \emptyset$, and write $U=U_i$, $V=U_k$ and $\tS = \tS_{ik}$. If a metric belongs to $\tS$, there is at least one conformal mapping between $U$ and $V$ and a corresponding conformal factor. We represent the set $\tS$ as a countable union 
\begin{equation*}
 \tS=\bigcup_{N=1}^\infty\tS^N,
\end{equation*}
where, for some fixed integer $s > \frac{n}{2} + 2$, 
\begin{equation*}
 \tS^N:=\{g \in \tS: \exists \ \phi:\ol{U}\rightarrow \ol{V}, \ cg=\phi^*g, \ \norm{c}_{W^{s,2}(U)} + \norm{1/c}_{W^{s,2}(U)} \leq N \}.
\end{equation*}
Here, since $\ol{U}$ is diffeomorphic to the closed unit ball in $\R^n$, we can use the standard Sobolev norms in Euclidean space, 
$$
\norm{f}_{W^{s,2}(U)} = \sum_{\abs{\alpha} \leq s} \norm{\partial^{\alpha} f}_{L^2(U)}.
$$

Suppose $(g_n)$ is a sequence of metrics in $\tS^N$ converging in the $C^\infty$ topology to some metric $g_0$. We will prove that the limit metric $g_0$ belongs to $\tS^N$, showing that $\tS^N$ is closed as required.

Let $\phi_n$ be a conformal diffeomorphism from $\ol{U}$ onto $\ol{V}$ as in the definition of $\tS^N$, with a conformal factor $c_n$ so that $\phi_n^* g_n = c_n g_n$, $\norm{c_n}_{W^{s,2}(U)} \leq N$. The bounded sequence $(c_n)$ has a subsequence (also denoted by $(c_n)$) converging weakly to some $c_0$ in $W^{s,2}(U)$. By compact Sobolev embedding $c_n \to c_0$ strongly in $C^2(\ol{U})$. We define new metrics $h_n$ in $\ol{U}\cup\ol{V}$ by
\begin{equation*}
h_n:=\left\{
\begin{array}{rl}
c_n g_n, & x\in \ol{U}\\
g_n, & x\in \ol{V} \\
\end{array}\right.
\end{equation*}
and use a similar definition for $h_0$. The metrics $h_n$ and $h_0$ are $C^\infty$ and $C^2$ smooth, respectively. Moreover, for each $n\in\mathbb{N}$, $\phi_n$ is an isometry from $(\ol{U},h_n)$ onto $(\ol{V},h_n)$.

Consider the distance functions $d_n = d_{h_n}$ and $d_0 = d_{h_0}$. The definition of the distance functions and a computation in local coordinates show that for $x,y \in \ol{U}$ or for $x,y \in \ol{V}$, 
\begin{equation*}%\label{meq}
 C_n^{-1} d_n(x,y)\leq d_0(x,y)\leq C_n d_n(x,y)
\end{equation*}
where $C_n \geq 1$, and the convergence $h_n\rightarrow h_0$ in $C^2$ implies that $C_n \to 1$. Moreover, the isometry $\phi_n:(\ol{U},h_n) \rightarrow (\ol{V},h_n)$ is distance preserving, and we obtain 
\begin{equation*}
d_0(\phi_n(x),\phi_n(y))\leq C_n d_n(\phi_n(x),\phi_n(y))=C_n d_n(x,y)\leq C_n^2 d_0(x,y).
\end{equation*}
That is, the sequence $(\phi_n)$ is equicontinuous with respect to $d_0$. The Arzela-Ascoli theorem yields a uniformly converging subsequence, $\phi_n\rightarrow \phi_0$.

Similar considerations for $\phi_n^{-1}$, using now the bound $\norm{1/c_n}_{W^{s,2}(U)} \leq N$,  yield that $\phi_0$ is a homeomorphism. It is also distance preserving, since 
\begin{equation*}
\begin{split}
 \frac{1}{C_n^2}d_0(x,y)&=\frac{1}{C_n^2}d_0(\phi_n^{-1}\phi_n(x),\phi_n^{-1}\phi_n(y)) \\
 &\leq d_0(\phi_n(x),\phi_n(y)) \leq C_n^2 d_0(x,y)
\end{split}
\end{equation*}
where $C_n\rightarrow 1$.  A distance preserving homeomorphism $\phi_0$ from $U$ onto $V$, which have $C^2$ smooth metric, is itself $C^3$ smooth by Theorem 2.1 of~\cite{T_isometries}. By the same theorem, $\phi_0$ is in fact an isometry in the sense that
\begin{equation*}
 h_0=\phi_0^*h_0.
\end{equation*}
That is,
\begin{equation*}
 c_0g_0=\phi_0^*g_0.
\end{equation*}
This implies that $\phi_0$ is a $C^3$ conformal diffeomorphism between two $C^{\infty}$ manifolds. Now, a theorem of Ferrand~\cite{Ferrand} shows that $\phi_0$ and also $c_0$ are actually infinitely smooth. Thus the metric $g_0$ belongs to $\tS^N$, so $\tS^N=\tS^N_{ik}$ is closed for all $i, k$ with $\ol{U}_i \cap \ol{U}_k = \emptyset$ and for all $N$.

III. \emph{$\tC \tS_{ik}^N$ is dense}: 
So far we have proved that 
$$
 \tG = \bigcap_{i,k,N \,;\, \ol{U}_i \cap \ol{U}_k = \emptyset}\tC\mathcal{S}_{ik}^N
$$
where each $\tC\mathcal{S}_{ik}^N$ is open. In the final step of the proof we show that $\tC\mathcal{S}_{ik}^N$ is dense for each $i$, $k$, $N$. In fact, it is sufficient to prove that any metric in $\mathcal{S}_{ik}$ can be approximated arbitrarily well by metrics which are not in $\mathcal{S}_{ik}$. This step concludes the proof.

We write $\mathcal{S}=\mathcal{S}_{ik}$, $U = U_i$ and $V = U_k$. We may assume $\ol{U}$ and $\ol{V}$ are disjoint closed balls in $\R^n$, with $0\in U$. Let $g\in \mathcal{S}$ and let $\phi$ be a corresponding conformal diffeomorphism $(\ol{U},g_1)\rightarrow (\ol{V},g)$, where the restrictions of the metric $g$ to $\ol{U}$ and $\ol{V}$ are denoted by $g_1$ and $g$, respectively. Let us also write $x=\phi(0)\in V$.

If $\tau_x$ is the translation $y\mapsto y+x$, then the mapping $\psi:=\tau_x^{-1}\circ \phi$ is a diffeomorphism defined in the neighborhood $\ol{U}$ of the origin and it preserves the origin, $\psi(0)=0$. By the conformality of $\phi$, the metric in $\ol{U}$ is of the form
\begin{equation}\label{metricform}
g_1=c\phi^*(g)=c\psi^*(\tau_x^*(g)),
\end{equation}
for some $c\in C^\infty(\ol{U})$.

The previous equation shows that the form of a metric in $\mathcal{S}$ is restricted near the origin. To study the nature of the restriction~\eqref{metricform}, we calculate upper bounds for the dimensions of the jet spaces \cite{Saunders} of metrics of the form~\eqref{metricform}. We will see that, for large enough $k$, the dimension of the $k^{th}$ jet space of all local metrics is larger than the dimension of the $k^{th}$ jet space of metrics of the form~\eqref{metricform}. 

%By using a bump function, we can locally modify the original metric. We modify the metric to a one that is not in the all $k$-jets of metrics of the form~\eqref{metricform}. Thus the modified metric can not be of the form the conformality of $\phi$  assumes for any conformal diffeomorphism $\phi$. The modified metric belongs to the complement of $\tS$. The modification can be made arbitrary small in the $C^\infty$ topology.

Metrics on a neighbourhood of the origin are mappings from $U$ to the space of symmetric positive definite bilinear forms on $\R^n$ , which has dimension $n(n+1)/2$. We denote by $\mathcal{M}^k$ the $k^{th}$ jet space of such mappings at the origin. Accordingly, the dimension of $\mathcal{M}^k$ is
\begin{equation*}
\frac{n(n+1)}{2}\binom{n+k}{k}.
\end{equation*}

Let us then consider the jet spaces of metrics of the form~\eqref{metricform}. Let $\mathcal{D}^k$ be the $k^{th}$ jet space of local diffeomorphisms $\psi$ at the origin with $\psi(0) = 0$. Its dimension is
\begin{equation*}
 n\binom{n+k}{k}-n.
\end{equation*}
%where we have subtracted $n$ since $\psi(0)=0$.

By the chain rule, we see that the $k$-jet of $\psi^*g$ at the origin is determined by the $(k+1)$-jet of $\psi$ and the $k$-jet of $g$ at the origin. In this way, $\mathcal{D}^{k+1}$ defines a smooth action
\begin{equation*}
G_1: \mathcal{D}^{k+1}\times \mathcal{M}^k\rightarrow \mathcal{M}^k.
\end{equation*}
Similarly $\mathcal{C}^k$, the $k^{th}$ jet space of (positive) functions at the origin, defines a smooth action in $\mathcal{M}^k$,
\begin{equation*}
G_2: \mathcal{C}^k\times\mathcal{M}^k\rightarrow \mathcal{M}^k. 
\end{equation*}
Moreover, we define a mapping $\gamma: V \rightarrow \mathcal{M}^k$ as the $k$-jet at the origin of the pullback of $g$ by $\tau_y$:
\begin{equation}
y\mapsto (\tau_y^*g)^{(k)}.
\end{equation}
Combining the above defined mappings naturally, we get a smooth mapping $G:\mathcal{C}^k\times\mathcal{D}^{k+1}\times V\rightarrow \mathcal{M}^k$,
\begin{equation*}
(c^{(k)},\psi^{(k+1)},y)\mapsto G_2\left(c^{(k)},G_1(\psi^{(k+1)},\gamma(y))\right).
\end{equation*}
By \eqref{metricform}, the $k^{th}$ jet of $g_1$ at the origin is given by $G(c^{(k)},\psi^{(k+1)},x)$ where $c^{(k)}$ and $\psi^{(k+1)}$ are the corresponding jets of $c$ and $\psi$Ê at the origin.

By Sard's theorem~\cite{Sard}, the image of the mapping $G$ has Lebesgue measure zero if the rank of $G$ is smaller than the dimension of the target space. The rank of the mapping $G$ is at most the dimension of its domain, which is
\begin{equation*}
\binom{n+k}{k}+n\binom{n+k+1}{k+1}-n+n=\binom{n+k}{k}+n\binom{n+k+1}{k+1}.
\end{equation*}
This is smaller than the dimension of the target space $\mathcal{M}^k$, when $k\geq 4$ and $n\geq 3$. Since we assume $n\geq 3$, the image of $G$ is of measure zero in $\mathcal{M}^k$, and thus its complement is dense, when $k\geq 4$. Thus, using a $C^\infty$ bump function we can deform the original metric near the origin such that the modified metric satisfies the following conditions: 
\begin{itemize}
\item it is arbitrarily close to the original metric in the $C^\infty$ topology
\item it is unaltered in $V$
\item the $k$-jets of the modified metric at the origin are not in image of $G$, for some $k\geq 4$. 
\end{itemize}
We have shown that the complement of $\mathcal{S}$ is dense in the $C^\infty$ topology of Riemannian metrics of $M$. This concludes the proof.
\end{proof}
\subsection*{Acknowledgements}

T.L.~is partly supported by Finnish National Graduate School in Mathematics and its Applications, and M.S.~is supported in part by the Academy of Finland. The authors would like to thank Rod Gover and Robin Graham for helpful suggestions.

%\nocite{*}
%\addcontentsline{toc}{chapter}{Bibliography}
%\bibliography{rough_dirac}
%\bibliographystyle{hamsplain}

\providecommand{\bysame}{\leavevmode\hbox to3em{\hrulefill}\thinspace}
\providecommand{\href}[2]{#2}

\end{document}